\theoremstyle{plain}
\newtheorem{thm}{Theorem}
\newtheorem{lem}[thm]{Lemma}
\theoremstyle{definition}
\newtheorem{defn}{Definition}
\newtheorem*{exmp*}{Example}
\theoremstyle{remark}
\tikzset{
    >=stealth',
    pil/.style={
           ->,
           thick}
}
\begin{document}
\title[]{Delone property of the holonomy vectors of translation surfaces}
\author[]{Chenxi Wu}
\maketitle
\begin{abstract}
We answer a question by Barak Weiss on the uniform discreteness of the set of the holonomy vectors of translation surfaces.
\end{abstract}

For a translation surface $M$, let $S_M\subset\mathbb{R}^2$ be the set of holonomy vectors of all saddle connections of $M$. The following is a plot of $S_M$ where $M$ is a lattice surface in $H(2)$ with discriminant 13.

\begin{figure}[H]
\includegraphics[scale=0.3]{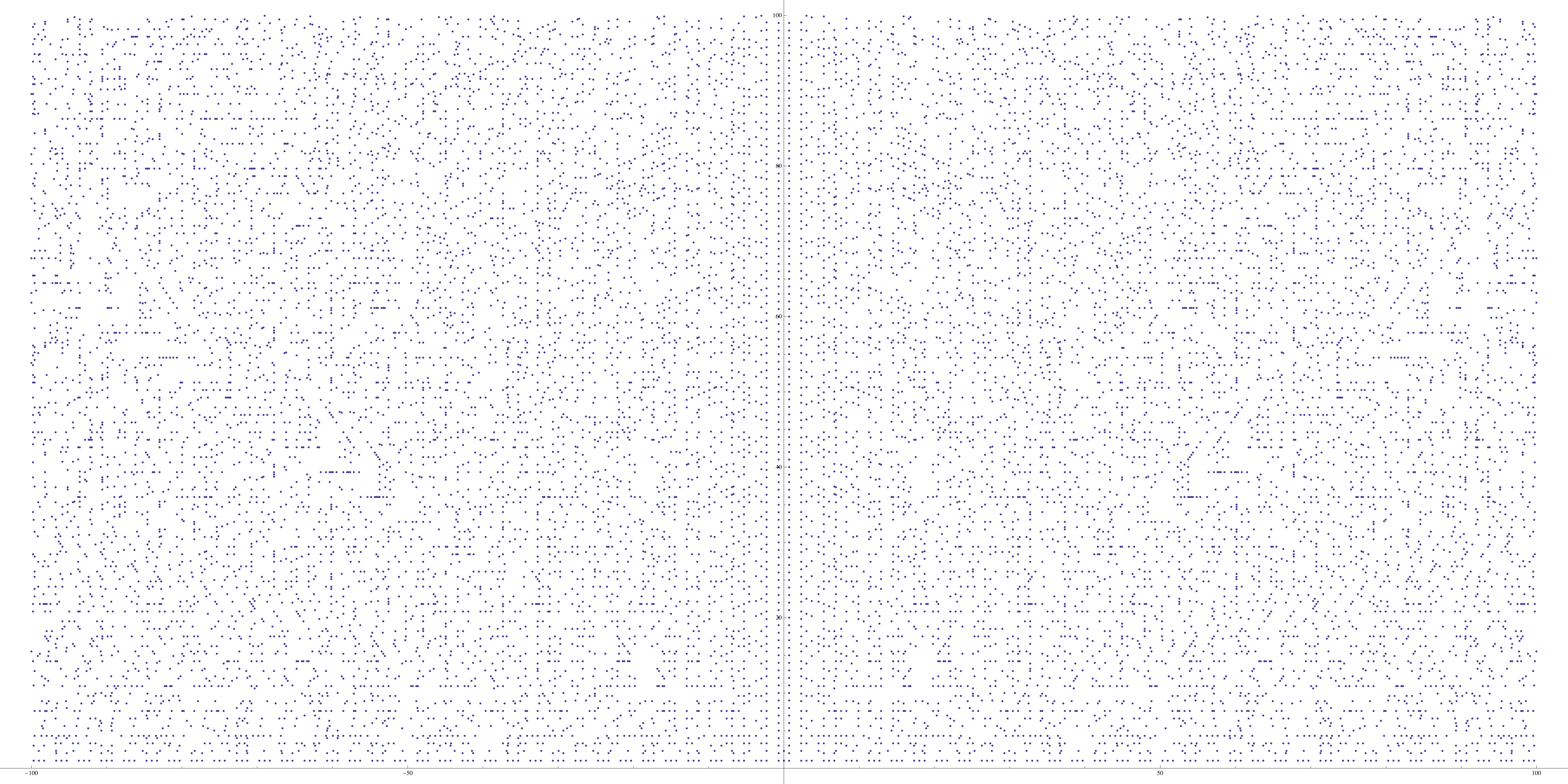}
\end{figure} 

A way of capturing the concept of uniformity of a subset of a metric space is the concept of Delon\'e set. 
\begin{defn}\cite{senechal2006quasicrystal}
Subset $A$ of a metric space $X$ is a Delon\'e set if:\\
\begin{enumerate}
\item $A$ is relatively dense, i.e. there is $R>0$ such that any ball of radius $R$ in $X$ contains at least one point in $A$.
\item $A$ is uniformly discrete, i.e. there is $r>0$ such that for any two distinct points $x,y\in A$, $d(x,y)>r$.
\end{enumerate}
\end{defn}

The Delon\'e property implies a quadratic upper and lower bound on the growth rate of $S_M$ (the number of points within radius $R$ of the origin). It is known that the growth rate of $S_M$ does satisfy such upper and lower bounds by \cite{masur1988lower} and \cite{masur1990growth}. For some translation surfaces the asymptotic upper and lower bounds agree. Veech \cite{veech1989teichmuller}, Eskin and Masur \cite{eskin2001asymptotic} showed that this is the case for Veech surfaces and generic translation surfaces respectively. Also, whether or not $M$ is a lattice surface is determined by properties of $S_M$ as shown in Smillie and Weiss \cite{smillie2010characterizations}. Furthermore, additional properties that $S_M$ has to satisfy are contained in the works of  Athreya and Chaika \cite{athreya2012distribution,athreya2013gap} on the distribution of angles between successive saddle connections of bounded length.\\

Barak Weiss asks for which translation surfaces $M$, is $S_M$ a Delon\'e set. Here, we will show that:

\begin{thm} If $M$ is a lattice surface then $S_M$ is never a Delon\'e set. On the other hand, there exists a non-lattice translation surface $M$ for which $S_M$ is a Delon\'e set.
\end{thm}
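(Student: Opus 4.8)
The plan is to treat the two assertions separately, using throughout the structural fact that for a lattice surface the holonomy set decomposes as a finite union of Veech-group orbits, $S_M=\bigcup_{i=1}^{k}\Gamma\cdot v_i$, where $\Gamma\subset SL(2,\mathbb R)$ is the (lattice) Veech group acting linearly on $\mathbb R^2$; this is part of the Veech/Smillie--Weiss picture \cite{veech1989teichmuller,smillie2010characterizations}. Since $S_M$ is always locally finite (finitely many saddle connections below any length), the only way it can fail to be Delon\'e is that one of the two defining conditions breaks, and the two halves of the theorem isolate which one.

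For the first assertion I would show that it is relative density that fails; note that uniform discreteness need \emph{not} fail (for a square-tiled surface all holonomy vectors lie in a fixed lattice $c\,\mathbb Z^2$, so distinct vectors stay at distance $\ge c$), so a uniform proof must go through density. Hence the goal is to produce, for every lattice surface, balls of arbitrarily large radius meeting no holonomy vector. The model case is the arithmetic one: there $S_M\subset c\,\mathbb Z^2$ is (a finite union of orbits of) primitive-type vectors, and a Chinese-Remainder construction of ``invisible'' lattice blocks — choose an $N\times N$ array of cells and primes so that every point of a suitable translated block shares a factor, hence is non-primitive and unrealized — yields empty squares of side tending to infinity. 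For a general lattice $\Gamma$ I would run the analogous argument orbit by orbit; a convenient auxiliary input is that a hyperbolic $g\in\Gamma$ furnishes a $g$-invariant indefinite form $Q(v)=\xi(v)\eta(v)$ (the product of the two eigen-coordinates) with $\inf_{v\in S_M}|Q(v)|>0$, proved by balancing powers of $g$ against the systole: this shows $S_M$ stays quantitatively away from each (minimal) eigendirection and organizes each orbit into hyperbola-shells one can count along.

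The heart of the matter — and the step I expect to be the main obstacle — is to make a \emph{single} large ball avoid \emph{all} of the finitely many orbits $\Gamma v_i$ at once for a general, possibly non-arithmetic, lattice; equivalently, to upgrade the average count $\sim cR^2$ coming from equidistribution \cite{veech1989teichmuller,eskin2001asymptotic} into a genuine worst-case gap, controlling precisely the fluctuations that the gap results of Athreya--Chaika \cite{athreya2012distribution,athreya2013gap} describe. I would attack this by pulling the candidate ball back through a cusp with the parabolic subgroup of $\Gamma$ — where the orbit becomes an explicit sheared grid and the unrealized region is visible — and then transporting the resulting emptiness outward by $g$.

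For the second assertion the plan is to exhibit one explicit non-lattice surface and to identify $S_M$ with a cut-and-project (model) set, which is Delon\'e by construction \cite{senechal2006quasicrystal}. Concretely I would take a surface whose trace field is a real quadratic field $K$ and whose saddle-connection holonomies can be completely enumerated as $\{v\in\Lambda:\sigma(v)\in W\}$, where $\Lambda$ is a lattice in the product of the two archimedean places of $K$, $\sigma$ is Galois conjugation into the ``internal'' plane, and $W$ is a bounded window cut out by the geometry (the heights and areas of the cylinders in each periodic direction); relative density and uniform discreteness then follow from the standard model-set theory. Finally I would check that $M$ is not a lattice surface, either by showing directly that its Veech group contains no hyperbolic element (so is elementary, hence not a lattice) or by invoking the Smillie--Weiss characterization \cite{smillie2010characterizations}. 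The two points requiring real work here are the \emph{complete} enumeration of saddle connections — so that $S_M$ is exactly the asserted model set, with no stray short vectors destroying uniform discreteness and no missing vectors destroying relative density — and the verification that the window $W$ is bounded with nonempty interior.
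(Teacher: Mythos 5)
Your strategy for the first assertion contains a fatal gap. Having correctly observed that square-tiled surfaces \emph{are} uniformly discrete (all holonomies lie in $\mathbb{Z}^2$), you conclude that a proof must show failure of relative density uniformly over all lattice surfaces, and you then set out to produce arbitrarily large empty balls for non-arithmetic lattice surfaces as well. But whether $S_M$ can be relatively dense for a non-arithmetic lattice surface is precisely what the paper leaves open (Question 1 at the end), and your proposed mechanism does not settle it: the Chinese-remainder construction is genuinely tied to the arithmetic of $\mathbb{Z}^2$, since it blocks realizability via common prime factors, i.e.\ via the fact that holonomies of a degree-$N$ cover of the torus have $\gcd$ at most $N$. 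For a non-arithmetic surface the holonomies lie in a finite-rank $\mathbb{Z}$-module that is \emph{dense} in $\mathbb{R}^2$, there is no $\gcd$ obstruction, and the cusp/hyperbola-shell picture you sketch supplies no replacement for it. The correct move is the opposite of uniformity: split into cases and let a \emph{different} Delon\'e condition fail in each. The paper shows that for non-arithmetic lattice surfaces it is uniform discreteness that fails: pick a periodic direction in which all cylinder widths are at most $r/4$, take two cylinders whose circumference vectors $l$, $l'$ have irrational ratio $\lambda$ (possible since the holonomy field is generated by these ratios and is not $\mathbb{Q}$), and apply Dehn twists to two crossing saddle connections to get holonomies $h+nl$ and $h'+n'l'$ in $S_M$; density of $\{m+m'\lambda : m,m'\in\mathbb{Z}\}$ in $\mathbb{R}$ then yields two points of $S_M$ at distance less than $r$. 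Relative density is only shown to fail in the square-tiled case, by exactly the CRT argument you describe (the paper's Lemma 3). Your insistence on a single mechanism forces you onto the open problem; the case split avoids it entirely.

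Your plan for the second assertion is likewise not a proof: you posit a non-lattice surface whose holonomy set is exactly a cut-and-project set, but you never exhibit one, and you yourself flag that the complete enumeration of saddle connections is the unresolved step. There is also a structural tension you should note: by the first part of the theorem, any \emph{lattice} surface built from a quadratic field fails uniform discreteness, so your construction must be non-lattice while still admitting total control of its saddle connections -- which is exactly what is hard. The paper sidesteps all of this with an elementary example: the double cover $M_1$ of $\mathbb{R}^2/\mathbb{Z}^2$ branched over $(0,0)$ and $(\sqrt{2}-1,\sqrt{3}-1)$. Passing to the $\mathbb{Z}^2$-cover of the plane branched over $U=\mathbb{Z}^2$ and $V=\mathbb{Z}^2+(\sqrt{2}-1,\sqrt{3}-1)$, saddle connections are segments between branch points meeting no other branch point: segments joining two points of $U$ (or of $V$) have rational slope, hence avoid the other lattice, and contribute exactly the primitive integer vectors; segments joining $U$ to $V$ have irrational slope, hence avoid everything, and contribute all of $\mathbb{Z}^2\pm(\sqrt{2}-1,\sqrt{3}-1)$. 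So $S_{M_1}$ is the union of the primitive vectors with two translates of $\mathbb{Z}^2$, which is uniformly discrete and, thanks to the translates, relatively dense. Both the enumeration and the Delon\'e verification are immediate, with no model-set machinery needed.
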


We will show that if $M$ is a non-arithmetic lattice surface, then $S_M$ cannot be uniformly discrete. We will also show that if $M$ is square-tiled then $S_M$ cannot be relatively dense. Combining these two results, we can conclude that when $M$ is a lattice surface, $S_M$ cannot be a Delon\'e set. \\

\begin{thm} If $M$ is a non-arithmetic lattice surface, then $S_M$ is not uniformly discrete.
\end{thm}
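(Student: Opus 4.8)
The plan is to show directly that there are pairs of distinct holonomy vectors that are arbitrarily close together. Write $\Gamma=\mathrm{SL}(M)$ for the Veech group. The derivative of an affine automorphism carries saddle connections to saddle connections and acts linearly on their holonomy, so $\Gamma$ acts on $S_M$; consequently the difference set $S_M-S_M$ is $\Gamma$-invariant. The first step is the following reduction: it suffices to exhibit a single nonzero $w\in S_M-S_M$ whose $\Gamma$-orbit accumulates at the origin. Indeed, if $w=p-q$ with $p,q\in S_M$ distinct and $\gamma_k\in\Gamma$ satisfy $\gamma_k w\to 0$, then $\gamma_k p,\gamma_k q\in S_M$ are distinct holonomy vectors with $\gamma_k p-\gamma_k q\to 0$, which violates uniform discreteness. (One checks that necessarily $|\gamma_k p|\to\infty$, so the offending pairs escape to infinity rather than accumulating in a fixed ball; this is consistent with $S_M$ being closed and discrete, as it must be, while still failing to be \emph{uniformly} discrete.)

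Since $M$ is a lattice surface, $\Gamma$ is a non-uniform lattice in $\mathrm{SL}_2(\mathbb{R})$. I would next invoke the classification of orbit closures for the linear action of such a lattice on $\mathbb{R}^2\setminus\{0\}$ (the linear avatar of the Hedlund--Dani theorem on horocycle orbits in the finite-volume quotient): for any nonzero $w$, the orbit $\Gamma w$ is either a closed discrete set, which happens exactly when the direction of $w$ is one of the finitely many parabolic (cusp) directions, or else it is dense in $\mathbb{R}^2$. In the dense case $\Gamma w$ accumulates at $0$, so the reduction above applies. Thus the whole problem collapses to producing a vector in $S_M-S_M$ whose direction is \emph{not} a parabolic direction of $\Gamma$. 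It is worth noting that in the arithmetic (square-tiled) case every rational direction is a cusp, so no such vector exists and each orbit is even uniformly discrete; this is exactly the point at which non-arithmeticity must enter.

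To locate a non-parabolic direction I would use the number theory of the trace field. For a lattice surface the trace field $K=\mathbb{Q}(\operatorname{tr}\Gamma)$ is totally real, the holonomy vectors lie (in suitable coordinates) in $K^2$, and $\Gamma\subset\mathrm{SL}_2(K)$; non-arithmeticity means $[K:\mathbb{Q}]=d\ge 2$, and by Takeuchi's arithmeticity criterion there is an element $A\in\Gamma$ and a Galois embedding $\sigma$ of $K$ for which $\sigma(A)$ is hyperbolic, i.e. $|\sigma(\operatorname{tr}A)|>2$. The parabolic directions form finitely many $\Gamma$-orbits inside $\mathbb{P}^1(K)$, and a height count (Schanuel's theorem, giving $\asymp H^{2d}$ points of $\mathbb{P}^1(K)$ of height $\le H$ against a far slower growth for a finite union of $\Gamma$-orbits when $d\ge 2$) shows that the parabolic directions are a sparse subset of $\mathbb{P}^1(K)$, so non-parabolic $K$-rational directions are abundant. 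To \emph{certify} a candidate difference $w$ as non-parabolic I would apply $\sigma$: if $\mathrm{dir}(w)$ were a cusp, fixed by a parabolic $\pi\in\Gamma$, then $\sigma(\mathrm{dir}(w))$ would be fixed by the parabolic $\sigma(\pi)$, hence a cusp of $\Gamma^\sigma$; the hyperbolicity of $\sigma(A)$ and the spread of $\Gamma^\sigma$ let one arrange a difference of holonomy vectors whose $\sigma$-image evades these, forcing $\mathrm{dir}(w)$ to be non-parabolic.

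The hard part, and the main obstacle, is precisely this last realization step: it is easy to see that non-parabolic $K$-directions exist, but one must exhibit such a direction \emph{inside the difference set} $S_M-S_M$, whose set of directions is only a $\Gamma$-invariant subset of $\mathbb{P}^1(K)$ that I do not control a priori. Everything else is soft once this is in hand. I expect the cleanest route is to fix two saddle connections in distinct cusps, sweep one of them by the parabolic stabilizing its cusp to generate an infinite family of differences, and use the Galois certification above to show at least one member of this family points in a non-parabolic direction; feeding that vector into the Hedlund--Dani dichotomy then completes the proof.
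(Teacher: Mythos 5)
Your first two steps are sound: $S_M$ is invariant under the Veech group $\Gamma=\mathrm{SL}(M)$, so if some nonzero $w\in S_M-S_M$ has $\Gamma$-orbit accumulating at $0$, uniform discreteness fails; and the dichotomy for the linear action of a non-uniform lattice on $\mathbb{R}^2\setminus\{0\}$ (every orbit is dense unless the direction is fixed by a parabolic, in which case it is discrete) is a correct consequence of Hedlund's theorem. The genuine gap is exactly where you yourself place it: you never produce a nonzero difference $w=p-q$ of holonomy vectors pointing in a non-parabolic direction. The ``Galois certification'' is not an argument --- you do not specify which saddle connections to take, how the parabolic sweep $w_n=p-P^nq$ interacts with the embedding $\sigma$, or why hyperbolicity of $\sigma(A)$ excludes all cusp directions for some $n$. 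Moreover, the sparsity/height-count observation cannot close this gap even in principle: the set of directions of $S_M-S_M$ is itself only countable, so knowing that the parabolic directions form a sparse subset of $\mathbb{P}^1(K)$ gives no information about whether these two particular countable sets intersect. Nothing short of an explicit construction inside $S_M-S_M$ will do, and that construction is the entire content of the theorem; as written, the proposal proves nothing beyond the (correct) reduction.

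It is worth comparing with how the paper fills precisely this hole, because its construction also shows your dynamical machinery is unnecessary. Fix $r>0$ and a periodic direction in which all cylinders have width at most $r/4$; by Kenyon--Smillie, the holonomy field of a lattice surface is generated by ratios of circumferences in such a direction, so non-arithmeticity yields two cylinders $C_i$, $C_j$ with core holonomies $l$ and $l'=\lambda l$, $\lambda\notin\mathbb{Q}$. Dehn twisting two saddle connections crossing these cylinders produces the families $h+nl$ and $h'+n'l'$ in $S_M$, and density of $\{m-m'\lambda: m,m'\in\mathbb{Z}\}$ in $\mathbb{R}$ makes $\|(h+n_0l)-(h'+n'_0l')\|<r$ for suitable $n_0,n'_0$. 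This directly exhibits pairs of distinct points of $S_M$ at distance less than any prescribed $r$, so one never needs the orbit-closure dichotomy at all: the irrationality of one ratio of circumferences, fed through Dehn twists, is both the missing realization step in your outline and a complete proof by itself.
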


\begin{proof}
Let $M$ is a non-arithmetic lattice surface and let $r>0$ be given.
By \cite{veech1989teichmuller} we can choose a periodic direction $\gamma$ of $M$ such that in this direction $M$ is decomposed into cylinders $C_1,\dots,C_n$ in the direction of $\gamma$, and the width of all these cylinders are no larger than $r/4$. Because $M$ is a lattice surface, the holonomy field \cite{ks} is generated by the ratios of the circumferences. Because $M$ is not square-tiled, the holonomy field can not be $\mathbb{Q}$. Hence, there exist two numbers $i$ and $j$ such that the quotient of the circumferences of $C_i$ and $C_j$ is not in $\mathbb{Q}$. Denote the holonomy vectors of periodic geodesics corresponding to $C_i$ and $C_j$ by $l$ and $l'$. Let $h$ and $h'$ be the holonomy vectors of two saddle connections $\alpha_0$ and $\beta_0$ crossing $C_i$ and $C_j$ respectively. Let $\alpha_n$ be the images of $\alpha_0$ under $n$-Dehn twists in cylinders $C_i$, $\beta_n$ be the image of $\beta_0$ under $n$-Dehn twists in cylinder $C_j$. Given $n\in\mathbb{Z}$, both $\alpha_n$ and $\beta_n$ are still saddle connections of $M$. Thus, for any integer $n$, the vectors $h+nl$ and $h'+nl'$ are in $S_M$. \\

\begin{figure}[H]
\begin{center}
\begin{tikzpicture}
\draw[-](0,0)--(0,2)--(6,2)--(6,0)--(0,0)node[pos=.5,below]{the cylinder $C_i$};
\draw[pil](0,2.5)--(6,2.5)node[pos=.5,above]{$l$};
\draw[pil](5,2)--(2,0)node[pos=.5,left]{$\alpha_0$};
  \fill (5,2) circle (1.5pt);
  \fill (2,0) circle (1.5pt);
\draw[pil](5,2)--(6,1.33)node[pos=.5,right]{$\alpha_1$};
\draw[pil](0,1.33)--(2,0)node[pos=.5,right]{$\alpha_1$};
\draw[pil](3,-1)--(0,-3)node[pos=.5,left]{$h$};
\draw[pil](3,-1)--(6,-3)node[pos=.5,left]{$h+l$};
\draw[pil](3,-1)--(12,-3)node[pos=.5,left]{$h+2l$};
\node at (6,-3.5){$\text{the holonomy vector of $\alpha_0$ and its images under Dehn twists}$};
\end{tikzpicture}
\end{center}
\end{figure}
\begin{figure}[H]
\begin{center}
\begin{tikzpicture}
\draw[-](0,-5)--(0,-4)--(5.245,-4)--(5.245,-5)--(0,-5)node[pos=.5,below]{the cylinder $C_j$};
\draw[pil](0,-3.5)--(5.245,-3.5)node[pos=.5,above]{$l'$};
\draw[pil](4,-4)--(3,-5)node[pos=.5,left]{$\beta_0$};

  \fill (4,-4) circle (1.5pt);
  \fill (3,-5) circle (1.5pt);
\draw[pil](4,-4)--(5.245,-4-1.245/4.245)node[pos=.5,right]{$\beta_1$};
\draw[pil](0,-4-1.245/4.245)--(3,-5)node[pos=.5,right]{$\beta_1$};
\draw[pil](1,-6)--(0,-7)node[pos=.5,left]{$h'$};
\draw[pil](1,-6)--(5.245,-7)node[pos=.5,left]{$h'+l'$};
\draw[pil](1,-6)--(10.49,-7)node[pos=.5,left]{$h'+2l'$};
\node at (5,-7.5){$\text{the holonomy vectors of $\beta_0$ and its images under Dehn twists}$};
\end{tikzpicture}
\end{center}
\end{figure}

Let us write $h$ as $h=h_1+h_2$ and $h'$ as $h'=h'_1+h'_2$, where $h_1$, $h'_1$ are vectors in direction $\gamma$, and $h_2$, $h'_2$ are vectors in direction $\gamma^{\perp}$. Because the width of $C_i$ and $C_j$ are no larger than $r/4$ by assumption, $||h_2-h'_2||<r/2$. Because $h_1$, $h'_1$, $l$ and $l'$ are vectors pointing in the same direction, we can write $h_1=al$, $h'_1=bl$, $l'=\lambda l$. Because the quotient $\lambda$ of lengths of $l'$ and $l$ is irrational, the set $\{m+m'\lambda:m,m'\in\mathbb{Z}\}$ is dense in $\mathbb{R}$, so there exists a pair of integers $n_0$ and $n'_0$ such that $|a-b+n_0-n'_0\lambda|<{r\over 2||l||}$. Thus $||(h_1+n_0l)-(h'_1+n'_0l')||<r/2$, and $||(h+n_0l)-(h'+n'_0l')||<r/2+r/2=r$. We conclude that $S_M$ contains two points for which the distance between them is less than any $r>0$, thus $S_M$ is not uniformly discrete.
\end{proof}

The above argument also works on those completely periodic surfaces such that in any given periodic direction, there are at least two closed geodesics whose length are not related by a rational multiple. Barak Weiss pointed out that the above argument implies that any orbit in $\mathbb{R}^2$ under a lattice in $SL(2,\mathbb{R})$ is either contained in a lattice in $\mathbb{R}^2$ or not uniformly discrete.\\

Now we deal with the square-tiled case. This case is closely related to the classical case of the torus discussed in \cite{herzog1971patterns}. In this case, $S_M=\{(p,q):\gcd(p,q)=1\}$\\

\begin{lem} For any positive integer $N$, the set $\{(p,q)\in\mathbb{Z}^2:\gcd(p,q)\leq N\}$ is not relatively dense in $\mathbb{R}^2$.\end{lem}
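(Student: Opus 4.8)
The plan is to show that there are arbitrarily large square ``holes'' in the lattice containing no point with small $\gcd$. Recall that since the set in question is contained in $\mathbb{Z}^2$, an open ball is disjoint from it precisely when every integer point it contains has $\gcd(p,q)>N$. So it suffices to produce, for every $R>0$, a block of consecutive lattice points $\{(a+i,b+j):0\le i,j\le m-1\}$ with $m>2R+1$, all of whose points satisfy $\gcd(a+i,b+j)>N$: such a block contains the open ball of radius $(m-1)/2>R$ about its center, and among integer points that ball meets only points of the block.

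The construction of the block will use the Chinese Remainder Theorem. First I would choose $m^2$ distinct primes $p_{ij}>N$, one for each index pair $(i,j)$ with $0\le i,j\le m-1$; this is possible because there are infinitely many primes. I then impose the congruences $a\equiv -i\pmod{p_{ij}}$ and $b\equiv -j\pmod{p_{ij}}$. Since the $p_{ij}$ are pairwise distinct primes, hence pairwise coprime, CRT guarantees integers $a$ and $b$ solving all of these congruences simultaneously. For each pair $(i,j)$ we then have $p_{ij}\mid a+i$ and $p_{ij}\mid b+j$, so $\gcd(a+i,b+j)\ge p_{ij}>N$, exactly as required.

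Putting the pieces together, the block $[a,a+m-1]\times[b,b+m-1]$ contains no point of $\{(p,q):\gcd(p,q)\le N\}$, because its only integer points are the $(a+i,b+j)$ just constructed. Hence the concentric open ball of radius $(m-1)/2>R$ is disjoint from the set, and since $R$ was arbitrary the set fails to be relatively dense.

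The only real idea, and the step I expect to be the crux, is recognizing that CRT can control the two coordinates over the whole grid at once: the congruences on $a$ involve distinct moduli as $(i,j)$ ranges over the block, and likewise for $b$, so coprimality of the moduli (guaranteed by taking the $p_{ij}$ distinct) is what makes the simultaneous solution exist. Everything else is bookkeeping, namely checking that the ball of radius $R$ sits inside the block and that the block's lattice points are exactly the congruence-controlled ones. Choosing the primes larger than $N$, rather than merely $>1$, is precisely what upgrades the classical statement about arbitrarily large holes among visible (primitive) lattice points to the $\gcd\le N$ version needed here.
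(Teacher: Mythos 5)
Your proposal is correct and is essentially the paper's own argument (which it attributes to Herzog--Stewart): both choose a grid of $n^2$ distinct primes exceeding $N$ and use the Chinese Remainder Theorem to force $p_{ij}\mid a+i$ and $p_{ij}\mid b+j$ throughout a block of side length $>2R$. The only cosmetic difference is that the paper bundles the congruences into row and column products $q_i=\prod_j p_{i,j}$ and $q'_j=\prod_i p_{i,j}$ before invoking CRT, whereas you apply CRT directly to the individual prime moduli; the two systems of congruences are identical.
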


\begin{proof}(\cite{herzog1971patterns}) Given $R>0$, choose an integer $n>2R$, and $n^2$ distinct prime numbers $p_{i,j}$, $1<i,j<n$ larger than $N$. Let $q_i=\prod_j p_{i,j}$ and $q'_j=\prod_i p_{i,j}$. By the Chinese remainder theorem there is an integer $x$ such that for all $i$, $x\equiv -i \mod{q_i}$, and an integer $y$ such that for all $j$, $y\equiv -j \mod{q'_j}$. Hence for any two positive integers $i,j\leq n$, $(x+i,y+j)\in \{(p,q)\in\mathbb{Z}^2:\gcd(p,q)>N\}$, in particular, there is a ball in $\mathbb{R}^2$ of radius $R$ that does not contain points in the set $\{(p,q)\in\mathbb{Z}^2:\gcd(p,q)\leq N\}$.
\end{proof}

Now we use the above lemma to show that if the surface $M$ is square-tiled, $S_M$ cannot be relatively dense.\\

\begin{thm} If $M$ is a square-tiled lattice surface then $S_M$ is not relatively dense in $\mathbb{R}^2$.\end{thm}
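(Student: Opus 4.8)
The plan is to reduce this statement to the Lemma by producing a constant $N$, depending only on $M$, for which $S_M \subseteq \{(p,q)\in\mathbb{Z}^2 : \gcd(p,q)\le N\}$. Once this containment is in hand, the Lemma supplies, for every $R>0$, a ball of radius $R$ missing $\{\gcd\le N\}$; such a ball also misses the smaller set $S_M$, so $S_M$ is not relatively dense. Thus essentially all the work is in establishing the gcd bound, and the conclusion follows formally.

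First I would fix the covering picture. A square-tiled $M$ is a degree-$d$ translation cover $\pi\colon M\to\mathbb{T}^2=\mathbb{R}^2/\mathbb{Z}^2$, branched only over the image of $0$ and tiled by $d$ unit squares. Since all gluings are integer translations, every saddle connection of $M$ develops to a segment between points of $\mathbb{Z}^2$, so $S_M\subseteq\mathbb{Z}^2$. Now fix a saddle connection $\sigma$ with holonomy $(p,q)$ and set $g=\gcd(p,q)$. Its projection $\pi\circ\sigma$ is the geodesic loop on $\mathbb{T}^2$ coming from the straight segment from $0$ to $(p,q)$; that segment meets $\mathbb{Z}^2$ exactly at the points $\tfrac{k}{g}(p,q)$, $k=0,1,\dots,g$, so $\pi\circ\sigma$ returns to $0\in\mathbb{T}^2$ precisely $g-1$ times in its interior. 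Lifting back, the interior of $\sigma$ passes through $g-1$ points of the fiber $\pi^{-1}(0)$, and because $\sigma$ is a saddle connection none of these interior crossings may be a cone point, so each is a regular (unramified) fiber point.

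The key observation — which I expect to be the main obstacle to phrase cleanly — is that these $g-1$ interior crossing points are \emph{distinct}. Since $M$ is a translation surface the geodesic direction is the constant $(p,q)/\lVert(p,q)\rVert$ throughout, and at a regular fiber point the trajectory simply continues straight; hence if $\sigma$ revisited one of these regular points the forward trajectories would coincide, forcing $\sigma$ to be periodic, so it could never terminate at a cone-point endpoint, contradicting that it is a saddle connection. Therefore $g-1$ is at most the number of regular points in $\pi^{-1}(0)$, which is at most $d$, and so $g\le d+1$. Taking $N=d+1$ gives $S_M\subseteq\{(p,q)\in\mathbb{Z}^2:\gcd(p,q)\le N\}$, and the Lemma finishes the proof. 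The only genuinely delicate point to make rigorous is this distinctness/periodicity argument; the integrality of the holonomy, the count of lattice crossings of the developed segment, and the bound on the number of regular fiber points by the degree are all routine once the covering structure is fixed.
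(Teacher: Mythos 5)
Your proof is correct, and it shares the paper's overall skeleton---reduce to the Lemma by producing a constant $N$ with $S_M\subseteq\{(p,q)\in\mathbb{Z}^2:\gcd(p,q)\le N\}$---but the mechanism you use to obtain the gcd bound is different from the paper's. The paper takes the primitive closed geodesic $\gamma$ in direction $(p,q)$ on the torus and considers its full preimage $\Gamma\subset M$, a graph of total edge length $N\sqrt{p^2+q^2}$ (where $N$ is the number of squares); a saddle connection in that direction is a non-self-intersecting path in $\Gamma$, hence has length at most $N\sqrt{p^2+q^2}$, forcing holonomy $(sp,sq)$ with $|s|\le N$. You instead count returns to the fiber: a saddle connection $\sigma$ with $\gcd=g$ crosses $\pi^{-1}(0)$ exactly $g-1$ times in its interior, each crossing is at a regular point, and the crossing points are pairwise distinct, so $g-1\le d$. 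The two arguments hinge on the same underlying fact---a straight segment terminating at a cone point cannot revisit a point with the same direction---but the paper simply asserts it (``a path on $\Gamma$ without self intersection'') whereas you supply a proof via determinism of the straight-line flow, which is exactly the justification the paper glosses over. One simplification: rather than arguing that $\sigma$ would be ``periodic'' (a finite segment is not literally periodic), note that if $\sigma(t_1)=\sigma(t_2)$ at a regular point with $t_1<t_2$, uniqueness of the flow gives $\sigma(t_1+s)=\sigma(t_2+s)$ for all admissible $s$, so the cone-point endpoint $\sigma(L)$ equals the interior regular point $\sigma(L-(t_2-t_1))$, an immediate contradiction. The trade-off between the two routes is negligible: your constant is $d+1$ instead of the paper's $N=d$, which makes no difference to the Lemma, and in exchange your write-up is rigorous at the one step the paper leaves implicit.
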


\begin{proof}
If $M$ is square-tiled, we can assume that $M$ is tiled be $1\times 1$ squares. Let $N$ be the number of squares that tiled $M$, then $M$ is an $n$-fold branched cover of $T=\mathbb{R}^2/\mathbb{Z}^2$ branched at $(0,0)$. Therefore, the holonomy of any saddle connection is in $\mathbb{Z}\times\mathbb{Z}$. For any pair of coprime integers $(p,q)$, let $\gamma$ be the closed geodesic in $T$ starting at $(0,0)$ whose holonomy is $(p,q)$. The length of $\gamma$ is $\sqrt{p^2+q^2}$. The preimage of $\gamma$ in $M$ is a graph $\Gamma$. The vertices of $\Gamma$ are the preimages of $(0,0)$, while edges are the preimages of $\gamma$. The sum of the lengths of the edges  of $\Gamma$ is $N\sqrt{p^2+q^2}$. Any saddle connection of $M$ in $(p,q)$-direction is a path on $\Gamma$ without self intersection, hence The length of such a saddle connection can not be greater than $N\sqrt{p^2+q^2}$. Hence, the holonomy of such a saddle connection is of the form $(sp,sq)$, $s\in\mathbb{Z}$ with $|s|\leq N$. Thus, $S_M\subset\{(p,q)\in\mathbb{Z}^2:\gcd(p,q)\leq N\}$, so by Lemma 3 $S_M$ is not relatively dense in $\mathbb{R}^2$.
\end{proof}

Finally, when $M$ is not a lattice surface, $S_M$ \emph{can} be a Delon\'e set, which we will show in Example 1 below. This construction finishes the proof of Theorem 1.\\

\begin{exmp*}  Let $M_1$ be the branched double cover of $\mathbb{R}^2/\mathbb{Z}^2$ branched at points $(0,0)$ and $(\sqrt{2}-1,\sqrt{3}-1)$. 
Let $\tilde{M}$ be a $\mathbb{Z}^2$-cover which is a branched double cover of $\mathbb{R}^2$ branched at $U=\mathbb{Z}^2$ and at $V=\mathbb{Z}^2+(\sqrt{2}-1,\sqrt{3}-1)$, where the deck group action is by translation. Then saddle connections on $M_1$ lift to saddle connections on $\tilde{M}$, and any two lifts have the same holonomy, hence $S_{M_1}$ is the same as $S_{\tilde{M}}$, which is the set of holonomies of line segments linking two points in $W=U\cup V$ which do not pass through any other point in $W$. If a line segment links two points in $U$, its slope must be rational or $\infty$, hence it would not pass through any point in $V$. Furthermore, it does not pass through any other point in $U$ if and only if its holonomy is a pair of coprime integers. The same is true for line segments linking two points in $V$. On the other hand, given any point $p\in U$ and any point $q\in V$, a line segment from $p$ to $q$ has irrational slope hence cannot pass through any other point in $U$ or $V$, therefore the holonomy of such line segment can be any vector in $\mathbb{Z}^2+(\sqrt{2}-1,\sqrt{3}-1)$. Similarly the holonomies of  saddle connections from $V$ to $U$ are $\mathbb{Z}^2-(\sqrt{2}-1,\sqrt{3}-1)$. Hence $S_{M_1}=\{(a,b)\in\mathbb{Z}^2:\gcd(a,b)=1\}\cup(\mathbb{Z}^2+(\sqrt{2}-1,\sqrt{3}-1))\cup(\mathbb{Z}^2-(\sqrt{2}-1,\sqrt{3}-1))$, this set is uniformly discrete, and the last two pieces are uniformly dense.\end{exmp*}

\textbf{Questions:}
\begin{enumerate}
\item Can the set of holonomy vectors of all saddle connections of a non-arithmetic lattice surface be relatively dense? 
\item Is there any characterization of the set of flat surfaces $M$ such that $S_M$ are Delon\'e, relatively dense or uniformly discrete? 
\item Is there a surface $M$ which is not a branched cover of the torus for which $S_M$ is Delon\'e?
\end{enumerate}

\bibliographystyle{alpha} 
\bibliography{nud}
\end{document}